\title{On abelian covers of the projective line with fixed gonality and many rational points}
\author{Xander Faber}
\address{Institute for Defense Analyses \\
Center for Computing Sciences \\
17100 Science Drive \\
Bowie, MD}
\email{awfaber@super.org}
\author{Floris Vermeulen}
\address{KU Leuven \\
  Department of Mathematics \\
  Celestijnenlaan 200B \\
  Leuven 3001, Belgium }
\email{floris.vermeulen@kuleuven.be}
\begin{document}
\begin{abstract}
  A smooth geometrically connected curve over the finite field $\FF_q$ with
  gonality $\gamma$ has at most ${\gamma(q+1)}$ rational points. The
  first author and Grantham conjectured that there exist curves of every
  sufficiently large genus with gonality $\gamma$ that achieve this bound. In
  this paper, we show that this bound can be achieved for an infinite sequence
  of genera using abelian covers of the projective line. We also argue that
  abelian covers will not suffice to prove the full conjecture.
\end{abstract}
\maketitle


\section{Introduction}
Throughout, we use the unqualified term ``curve'' to mean a smooth proper
geometrically connected scheme of dimension~$1$ over a field. Given a curve $C$
over a finite field $\FF_q$, the existence of a morphism $f \colon C \to \PP^1$
provides a bound on the number of points of $C$. Indeed, every rational point of
$C$ maps to one of the rational points of $\PP^1$, and each fiber contains at
most $\deg(f)$ points:
\begin{equation}
  \label{eq:morphism_bound}
  \#C(\FF_q) \le \deg(f) \; (q+1).
\end{equation}
The minimum degree of a morphism to $\PP^1$ is known as the \textbf{gonality}
of~$C$.

Write $N_q(g,\gamma)$ for the maximum number of points on a curve $C_{/\FF_q}$
with genus $g$ and gonality $\gamma$; we set this quantity to $-\infty$ if no
such curve exists. From~\eqref{eq:morphism_bound}, we know that $N_q(g,\gamma)
\le \gamma (q+1)$. In \cite{Faber_Grantham_GF2,Faber_Grantham_GF34}, the first
author and Grantham gave many examples of curves where this bound is achieved,
which led to the following conjecture:

\begin{conjecture}
  \label{conj:large_genus}
  Fix a prime power $q$ and an integer $\gamma \ge 2$. For $g$ sufficiently
  large, we have
  \[
  N_q(g,\gamma) = \gamma(q+1).
  \]
\end{conjecture}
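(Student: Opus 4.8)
\medskip\noindent\textbf{Proof proposal.}\ The bound $N_q(g,\gamma)\le\gamma(q+1)$ is already in hand via~\eqref{eq:morphism_bound}, so the whole content is the reverse inequality: for every sufficiently large $g$ I want a curve $C_{/\FF_q}$ of genus $g$ carrying a degree-$\gamma$ morphism $\pi\colon C\to\PP^1$ that splits completely over each of the $q+1$ rational points of $\PP^1$, together with a proof that $\gamma$ is actually the gonality and not merely an upper bound. Given such a $\pi$, each rational point of $\PP^1$ contributes $\gamma$ rational points of $C$, so $\#C(\FF_q)=\gamma(q+1)$ and hence $N_q(g,\gamma)=\gamma(q+1)$. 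The plan therefore splits into three tasks: (i) construct, for as many $g$ as possible, a degree-$\gamma$ cover of $\PP^1$ of genus $g$ that splits completely over $\PP^1(\FF_q)$; (ii) force the gonality of $C$ to equal $\gamma$; and (iii) arrange that \emph{every} large $g$ occurs.

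For (i) I would use abelian covers. When $\gamma\mid q-1$ these are Kummer covers $C\colon y^\gamma=f(t)$; complete splitting over $t=a$ means $\mathrm{ord}_a(f)\equiv0\pmod\gamma$ and $f(a)$ is a $\gamma$-th power in $\FF_q^\times$, and similarly at $t=\infty$. Writing $f=\prod_j P_j^{e_j}$ with the $P_j$ monic irreducible of degree $\ge2$ (so there are no rational branch points), the conditions at the $q+1$ rational points become a linear system over $\FF_q^\times/(\FF_q^\times)^\gamma$ in the exponent vector $(e_j)$; this system has nonzero solutions, and the freedom in choosing the $P_j$ (their degrees and their reductions) lets us control the ramified support — hence, through Riemann--Hurwitz,
\[
  2g(C)-2=-2\gamma+\sum_{j\,:\,\gcd(\gamma,e_j)<\gamma}\bigl(\gamma-\gcd(\gamma,e_j)\bigr)\deg P_j ,
\]
the genus. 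For arbitrary $\gamma$ one replaces Kummer theory by class field theory over $\FF_q(t)$ --- equivalently Carlitz--Drinfeld cyclotomic theory with a suitable choice of the place at infinity: ray class fields of $\PP^1$ with modulus supported on non-rational closed points give abelian covers in which all rational points split completely and whose ramification, hence genus, is controlled via the conductor--discriminant formula.

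For (ii), these covers are Galois with abelian group $G$ of order $\gamma$, so every intermediate cover of $\PP^1$ is again Galois, and Castelnuovo--Severi pins down the gonality. If $C$ admitted a morphism $h\colon C\to\PP^1$ of degree $d<\gamma$, then either $(\pi,h)$ is birational onto its image in $\PP^1\times\PP^1$, forcing $g(C)\le(\gamma-1)(d-1)<(\gamma-1)^2$, or $\pi$ and $h$ factor through a common proper quotient of $C$. When $\gamma$ is prime the only proper quotient is $\PP^1$ itself, so in the second case $h$ factors through $\pi$ and hence $\gamma\mid d$, a contradiction; thus for $\gamma$ prime, $g(C)>(\gamma-1)^2$ already forces the gonality to be exactly $\gamma$. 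For composite $\gamma$ one imposes the further constraint on the ramification that no proper intermediate abelian cover of $\PP^1$ has both small genus and low gonality --- a finite amount of extra bookkeeping.

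The genuine difficulty is (iii), and it is exactly where abelian covers fall short. The genus of a degree-$\gamma$ abelian cover that splits completely over $\PP^1(\FF_q)$ is tightly constrained: its Riemann--Hurwitz contributions come from cyclic inertia over non-rational closed points, and a careful analysis of the possible conductors (which this paper carries out) shows that the attainable genera form a union of arithmetic progressions omitting infinitely many values --- for $\gamma$ prime, every contribution $(\gamma-1)\deg P_j$ above is divisible by $\gamma-1$. Thus abelian covers can realize only an infinite sequence of genera, precisely the theorem this paper establishes, and cannot settle Conjecture~\ref{conj:large_genus} on their own; this matches the paper's own assertion that abelian covers will not suffice. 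To reach the missing genera one has to leave the abelian setting: either patch together abelian covers of several degrees and ramification types whose genus progressions jointly cover every residue, or --- more promisingly --- use solvable or general Galois covers, where wild ramification and Artin--Schreier--Witt conductors permit genus increments with greatest common divisor $1$. Producing, for every large $g$, such a cover that is simultaneously totally split over $\PP^1(\FF_q)$ and provably of gonality exactly $\gamma$ is the crux, and it is the step I expect to be the main obstacle.
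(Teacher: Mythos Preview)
The statement you are attempting to prove is not a theorem in the paper but a \emph{conjecture}: the paper offers no proof of it. What the paper does prove is the weaker Theorem~\ref{thm:main}, that $N_q(g,\gamma)=\gamma(q+1)$ holds for an infinite sequence of genera (a $\limsup$ statement), and it explicitly argues---via the parity obstruction for $\gamma\equiv 1\pmod 4$ prime---that abelian covers cannot yield the full conjecture. The paper also records that the conjecture is known for odd $q$ via curves on toric surfaces and remains open for even~$q$.

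Your proposal is therefore not to be compared against any proof in the paper, and on its own terms it is not a proof either. You correctly isolate the crux as step~(iii), realising \emph{every} sufficiently large genus, and then concede that abelian covers miss infinitely many genera and that the suggested escape through solvable or general Galois covers is ``the step I expect to be the main obstacle.'' No construction is actually supplied for the missing genera, so what you have written is an outline of the difficulties rather than a resolution of them. One incidental remark on step~(ii): the Castelnuovo--Severi argument is unnecessary. Once $C$ carries a degree-$\gamma$ map to $\PP^1$ and has $\gamma(q+1)$ rational points, the bound~\eqref{eq:morphism_bound} applied to any morphism $C\to\PP^1$ already forces its degree to be at least $\gamma$, so the gonality is exactly~$\gamma$; this is how the paper handles the point in the proof of Theorem~\ref{thm:main}.
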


The implicit challenge in the conjecture is to construct interesting families of
curves. The first author and Grantham wrote down explicit hyperelliptic
equations to handle the case $\gamma = 2$ \cite{Faber_Grantham_GF34}.  The
second author established the conjecture for $q$ odd by constructing singular
curves on toric surfaces \cite{vermeulen2022curves}. The proof uses a
squarefree-discriminant trick to insure that the singular curves are smooth
above non-rational points of $\PP^1$; in characteristic~$2$, discriminants are
never squarefree, so one must find a different approach to control singularities.
The goal of this note is to investigate how much can be proved using only
abelian covers of the projective line, especially in the remaining open case
where $q$ is even. Recall that an \textbf{abelian cover} of curves $X \to Y$ is
a nonconstant morphism for which the corresponding extension of function fields
$\kappa(X) / \kappa(Y)$ is Galois with abelian group of automorphisms. See
\cite[\S5.12]{Serre_Rational_Points_Book} and \cite[\S6.3]{TVN_advanced} for
summaries of geometric class field theory; see \cite[Ch.~V]{milneCFT} and
\cite{Serre_Algebraic_Groups} for more robust treatments.

\begin{theorem}
  \label{thm:main}
  Fix a prime power $q$ and an integer $\gamma \ge 2$. There exists an infinite
  sequence of distinct positive integers $(g_i)$ and abelian covers $C_i \to
  \PP^1$ of degree $\gamma$ such that $C_i$ has genus $g_i$ and $\gamma(q+1)$
  rational points. In particular,
  \[
     \limsup_{g \to \infty} N_q(g,\gamma) = \gamma(q+1).
  \]
\end{theorem}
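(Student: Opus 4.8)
The plan is to produce each $C_i$ by geometric class field theory, as a subcover of a ray class field of $\PP^1$. Fix a modulus $\mathfrak{m}$ on $\PP^1$ whose support consists only of closed points of degree $\ge 2$, let $J_\mathfrak{m}$ be the associated generalized Jacobian, and let $\PP^1_\mathfrak{m}$ be the maximal geometric abelian cover of $\PP^1$ with conductor dividing $\mathfrak{m}$, so that by geometric class field theory $\mathrm{Gal}(\PP^1_\mathfrak{m}/\PP^1)\cong J_\mathfrak{m}(\FF_q)$. The point count we want is automatic for any subcover $C\to\PP^1$ in which all $q+1$ rational points of $\PP^1$ split completely: a rational point of $C$ maps to a rational point of $\PP^1$ (its residue field embeds into $\FF_q$), and $\operatorname{supp}\mathfrak{m}$ contains none of these, so $\#C(\FF_q)=\deg(C/\PP^1)\cdot(q+1)$; we will arrange $\deg(C/\PP^1)=\gamma$.

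First I would pin down the splitting condition. Using the Lang isogeny $y\mapsto\mathrm{Frob}(y)-y$ on $J_\mathfrak{m}$ together with the Abel--Jacobi map $P\mapsto[P-\infty]$ on $\PP^1\setminus\operatorname{supp}\mathfrak{m}$, one realizes $\PP^1_\mathfrak{m}$ as the pullback of the Lang isogeny, and then the Frobenius at a rational place $P\notin\operatorname{supp}\mathfrak{m}$ is exactly the class $[P-\infty]\in J_\mathfrak{m}(\FF_q)$. Hence such a $P$ splits completely in the subcover corresponding to a quotient $J_\mathfrak{m}(\FF_q)\twoheadrightarrow A$ iff $[P-\infty]$ dies in $A$. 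I would therefore set $A_\mathfrak{m}:=J_\mathfrak{m}(\FF_q)/H_\mathfrak{m}$, where $H_\mathfrak{m}$ is generated by the classes $[P-\infty]$ for $P\in\PP^1(\FF_q)\setminus\operatorname{supp}\mathfrak{m}$; these are the images of the linear polynomials $t-a$ ($a\in\FF_q$), so $H_\mathfrak{m}$ has at most $q$ generators. In the subcover $C_\mathfrak{m}$ with $\mathrm{Gal}(C_\mathfrak{m}/\PP^1)=A_\mathfrak{m}$, all $q+1$ rational points split ($\infty$ because $[\infty-\infty]=0$, the rest by construction), the cover is geometric and ramified only over $\operatorname{supp}\mathfrak{m}$, hence over no rational point. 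Since $A_\mathfrak{m}$ is finite abelian it has a quotient of order $\gamma$ as soon as $\gamma\mid|A_\mathfrak{m}|$; passing to such a quotient yields $C'_\mathfrak{m}\to\PP^1$ of degree exactly $\gamma$ with $\gamma(q+1)$ rational points. (One can equivalently build $C'_\mathfrak{m}$ as a fiber product over $\PP^1$ of cyclic covers of prime-power degree, one per prime power exactly dividing $\gamma$, the coprimality of the degrees forcing geometric connectedness and the exact degree.)

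The crux is thus to choose $\mathfrak{m}$ with $\gamma\mid|A_\mathfrak{m}|$, for infinitely many $\mathfrak{m}$ giving pairwise distinct genera. Write $\gamma=p^a m$ with $p=\mathrm{char}(\FF_q)$ and $\gcd(m,p)=1$, and take $\mathfrak{m}=nQ+P_1+\dots+P_k$ with $Q,P_1,\dots,P_k$ distinct closed points of a common degree $d$ with $m\mid q^d-1$. A standard computation gives $|J_\mathfrak{m}(\FF_q)|=q^{d(n-1)}(q^d-1)^{k+1}/(q-1)$, while, being generated by $\le q$ elements, $|H_\mathfrak{m}|$ divides the $q$-th power of the exponent of $J_\mathfrak{m}(\FF_q)$, and that exponent is only $\le p^{\lceil\log_p n\rceil}(q^d-1)$. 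Comparing, the prime-to-$p$ part of $|A_\mathfrak{m}|$ becomes divisible by $m$ once $k$ is large, and the $p$-part has order at least $p^{\,ed(n-1)-q\lceil\log_p n\rceil}$ (with $q=p^e$), which exceeds $p^a$ once $n$ is large; so $\gamma\mid|A_\mathfrak{m}|$. The non-reduced term $nQ$ is essential when $p\mid\gamma$, since every tame abelian cover of $\PP^1$ has degree prime to $p$ — this is the only place wild ramification enters. For the genus, Riemann--Hurwitz gives $2g(C'_\mathfrak{m})-2=-2\gamma+\deg\mathfrak{D}$, and each closed point of degree $\ge 2$ at which $C'_\mathfrak{m}$ ramifies contributes at least $\gamma$ to $\deg\mathfrak{D}$; by letting $\mathfrak{m}$ involve more and more places (chosen, e.g.\ via a pigeonhole argument among the $\approx q^d/d$ places of degree $d$, to share common local data) and choosing the degree-$\gamma$ quotient of $A_\mathfrak{m}$ to be nontrivial on the inertia at each of them, one forces $C'_\mathfrak{m}$ to ramify at an unbounded number of places, so $g(C'_\mathfrak{m})\to\infty$. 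Keeping the genera that occur and discarding repetitions gives a strictly increasing sequence $(g_i)$, each $g_i\ge 1$ since $\gamma(q+1)>q+1=\#\PP^1(\FF_q)$.

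I expect the main obstacle to be precisely this coordinated choice: the modulus must be enlarged in two independent directions at once — many places for the prime-to-$p$ part of $|A_\mathfrak{m}|$, a single place of high multiplicity for the $p$-part — while one simultaneously steers the ramification of the degree-$\gamma$ subcover so that its genus genuinely grows instead of collapsing, all without disturbing the splitting of the $q+1$ rational points; the conductor--discriminant bookkeeping for abelian covers is routine but must be done carefully. (For small $\gamma$ one can bypass class field theory with explicit equations — e.g.\ $y^2=h(t)^2+\delta\,(t^q-t)\,g(t)$ with $\delta\in\FF_q^\times$ a non-square when $q$ is odd, or $y^2+y=(t^q-t)g(t)/M(t)$ when $q$ is even — which is how the cases $\gamma=2$ were treated originally.)
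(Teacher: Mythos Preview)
Your plan is a genuinely different route from the paper's, and the gap you yourself flag is real. The paper avoids the ``coordinated choice'' problem entirely by a two-step construction. First it fixes the explicit degree-$(q+1)$ map $h\colon \PP^1\to\PP^1$, $h(t)=a(t)/(t^q-t)$ with $a$ irreducible of degree $q+1$, which sends every $\FF_q$-rational point to $\infty$. Then it uses class field theory only to build abelian covers $f_i\colon X_i\to\PP^1$ of degree $\gamma$ in which the single point $\infty$ splits completely and whose branch locus is one closed point $P_i$ lying outside a prescribed finite set (this is immediate: take $\mathfrak{m}=e[P_i]$ and use the base point $\infty$ for the Abel--Jacobi map, so $[\infty]\mapsto 0$ automatically). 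Finally $C_i=X_i\times_{\PP^1}\PP^1$ via $h$; the pullback $\hat f_i$ is still abelian of degree $\gamma$, every rational point of the target $\PP^1$ lies over $\infty$ and hence splits, and a short fiber-product genus computation gives $g(C_i)=(q+1)g(X_i)+q(\gamma-1)$. Since the $P_i$ are chosen from the complement of an ever-growing finite set, $\deg P_i\to\infty$, so $g(X_i)\to\infty$ by Hurwitz, and $g(C_i)\to\infty$.

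The point is that the map $h$ decouples the two requirements you are trying to satisfy simultaneously. You need all $q+1$ rational Frobenii to die in your quotient \emph{and} you need the chosen degree-$\gamma$ quotient to have large ramification; the paper pushes the first requirement onto $h$, so on the class-field-theory side only $\infty$ must split, and then the whole Galois group is inertia at the unique branch point, so \emph{every} nontrivial subcover is totally ramified there and the genus is forced up just by increasing $\deg P_i$. In your direct approach the step ``choose the degree-$\gamma$ quotient of $A_{\mathfrak m}$ to be nontrivial on the inertia at each of them'' is not justified: after modding out $H_{\mathfrak m}$ you have no control over which inertia images survive, and a quotient of the right order could in principle be ramified at only a bounded number of the $P_j$. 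Your pigeonhole hint does not obviously close this; one would need something like a linear-algebra argument over $\ZZ/\ell\ZZ$ showing that the $(\le q)$-dimensional image of $H_{\mathfrak m}$ cannot meet too many coordinate hyperplanes, and the wild part at $nQ$ must be handled separately. This may be salvageable, but it is considerably more delicate than the paper's fiber-product trick, which is the missing idea.
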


Unfortunately, abelian covers are not sufficient to replace the limsup with a
proper limit. For example, consider the case of covers of $\PP^1$ of prime
degree $\gamma \equiv 1 \pmod{4}$, where $\gamma$ does not divide $q$. If $C \to
\PP^1$ is abelian of degree~$\gamma$, then all of the ramification indices are
equal to~$1$ or~$\gamma$. Let $m$ be the number of ramified points of $C$. The
Hurwitz formula shows the genus of~$C$ is $\frac{\gamma-1}{2}(m-2)$. This
expression is always even, so abelian covers of degree~$\gamma$ will never have
anything to say about curves of odd genus.

Roughly speaking, there are three main approaches in the literature to
constructing curves of large genus with many rational points:
\begin{itemize}
  \item Modular curves, especially in towers;
  \item Class field theory constructions; and
  \item \textit{Ad hoc} methods for special families. 
\end{itemize}
Modular methods typically produce curves whose genus and gonality grow together,
which is not helpful for Conjecture~\ref{conj:large_genus}. As indicated in the
preceding paragraph, methods of class field theory do not produce enough curves
to address the full scope of the conjecture. This leaves \textit{ad hoc}
constructions. One particularly powerful construction is that of curves on toric
surfaces; this was first introduced for the study of rational points in
\cite{KWZ} and later used to prove Conjecture~\ref{conj:large_genus} for odd $q$
in \cite{vermeulen2022curves}. See \cite[p.vii]{Serre_Rational_Points_Book} and
\cite[p.1]{TVN_advanced} for more philosophical exposition on constructing
curves over finite fields.

\medskip

\noindent
\textbf{Acknowledgments.} We thank Congling Qiu for spotting an omission in the
hypotheses of Lemma~\ref{lem:branch} in the version of this paper that appeared
in the \textit{International Journal of Number Theory} in 2022.

  

\section{Proof of Theorem~\ref{thm:main}}

In our discussion below, we will identify $\PP^1$ with $\Spec \FF_q[t] \cup
\{\infty\}$. Recall that the \textbf{branch locus} of a finite morphism $f
\colon X \to Y$ is the minimal closed subset $B \subset Y$ such that the induced
morphism $X \smallsetminus f^{-1}(B) \to Y \smallsetminus B$ is
\'etale. Equivalently, the branch locus is the support of the discriminant of
the induced extension of function fields $\kappa(X) / \kappa(Y)$.

The construction of our sequence of curves involves three steps:
\begin{itemize}
  \item Choose a morphism $h \colon \PP^1 \to \PP^1$ that maps every point of
    $\PP^1(\FF_q)$ to $\infty$.
  \item Construct a sequence of morphisms $f_i \colon X_i \to \PP^1$ of degree
    $\gamma$ such that $\infty$ splits completely in each $X_i$ and the genera
    of the $X_i$ tend to infinity.
  \item Take the fiber product of $f_i$ and $h$ to obtain a new covering of
    curves $C_i := X_i \times_{\PP^1} \PP^1 \to \PP^1$ of degree~$\gamma$ with the
    correct number of rational points. 
\end{itemize}
We must take care in choosing the branch loci of the morphisms $f_i$ in order to
have control over the genus of the curve $C_i$.

The construction of $h$ is explicit. Let $a \in \FF_q[t]$ be an irreducible
polynomial of degree $q+1$ and define
\[
   h(t) = \frac{a(t)}{t^q - t}.
\]
Then $h$ has degree $q+1$, and each point of $\PP^1(\FF_q)$ maps to
$\infty$. 

\begin{lemma}
  \label{lem:abelian}
  Fix $\gamma \ge 1$ and a proper closed subset $S \subset \PP^1$.  There exists
  an abelian cover $f \colon X \to \PP^1$ of degree $\gamma$ such that the point
  $\infty$ splits completely in $X$, and such that the branch locus of $f$ is
  supported on a closed point outside $S$.
\end{lemma}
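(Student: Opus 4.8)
\medskip
\noindent\textbf{Proof plan.}
The plan is to produce $f$ by geometric class field theory for $\PP^1$: I will work with a modulus $\mathfrak m$ supported at a single closed point $\mathfrak p\notin S$, realize $X$ as the curve attached (via Artin reciprocity) to an index-$\gamma$ subgroup of the ray class group, and arrange the complete splitting of $\infty$ by hand using the degree map. First I choose $\mathfrak p$ and $\mathfrak m$. Write $\gamma=p^{a}\gamma'$ with $p$ the characteristic of $\FF_q$ and $\gcd(\gamma',p)=1$. Since $q$ is coprime to $\gamma'(q-1)$, there is a positive integer $d$ with $q^{d}\equiv 1\pmod{\gamma'(q-1)}$, and then $\gamma'$ divides $(q^{d}-1)/(q-1)=1+q+\cdots+q^{d-1}$. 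This congruence survives replacing $d$ by any of its multiples, and $\PP^1$ has roughly $q^{d}/d$ closed points of degree $d$ while $\#S<\infty$; so after enlarging $d$ I may fix a closed point $\mathfrak p$ of degree $d$ with $\mathfrak p\notin S\cup\{\infty\}$. Writing $q=p^{k}$, choose $n\ge 2$ with $kd(n-1)\ge a$, so that $p^{a}\mid q^{d(n-1)}$, and set $\mathfrak m=n\mathfrak p$.

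Next I would invoke the class field theory dictionary for $\PP^1$ (see \cite[\S5.12]{Serre_Rational_Points_Book}, \cite[\S6.3]{TVN_advanced}, \cite[Ch.~V]{milneCFT}): the finite abelian covers of $\PP^1_{/\FF_q}$ with branch locus contained in $\{\mathfrak p\}$ and conductor dividing $\mathfrak m$ correspond, via Artin reciprocity, to the finite-index subgroups of the ray class group $\operatorname{Cl}_{\mathfrak m}(\PP^1)$, and for a place $v\notin\{\mathfrak p\}$ the Frobenius at $v$ in such a cover is the image of the class $[v]\in\operatorname{Cl}_{\mathfrak m}(\PP^1)$. The degree map gives a short exact sequence $0\to\operatorname{Cl}^{0}_{\mathfrak m}(\PP^1)\to\operatorname{Cl}_{\mathfrak m}(\PP^1)\xrightarrow{\deg}\ZZ\to 0$; because $\operatorname{Pic}^{0}(\PP^1)=0$ and the global units of $\PP^1$ are just $\FF_q^{\times}$, the degree-zero part fits in
\[
  1\longrightarrow\FF_q^{\times}\longrightarrow\bigl(\mathcal O_{\mathfrak p}/\mathfrak p^{\,n}\bigr)^{\times}\longrightarrow\operatorname{Cl}^{0}_{\mathfrak m}(\PP^1)\longrightarrow 0,
\]
so $\#\operatorname{Cl}^{0}_{\mathfrak m}(\PP^1)=q^{d(n-1)}\cdot\tfrac{q^{d}-1}{q-1}$, which is divisible by $\gamma$ by the choices of $d$ and $n$. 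Since a finite abelian group has a subgroup of every index dividing its order, I may pick $H\le\operatorname{Cl}^{0}_{\mathfrak m}(\PP^1)$ of index $\gamma$.

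Now I exploit that the degree sequence splits via the degree-one class $[\infty]$ (note $\infty\notin\{\mathfrak p\}$): set $\widetilde H=H\oplus\ZZ\cdot[\infty]\le\operatorname{Cl}_{\mathfrak m}(\PP^1)$, a subgroup of index $\gamma$ containing $[\infty]$, and let $f\colon X\to\PP^1$ be the corresponding abelian cover. Then $\deg f=\gamma$, the branch locus of $f$ is contained in $\{\mathfrak p\}$ (a closed point outside $S$), and $\infty$ splits completely because its Frobenius $[\infty]$ lies in $\widetilde H$. If $\gamma\ge 2$, then since $\infty$ splits completely $X$ has a place with residue field $\FF_q$, so the constant field of $X$ is $\FF_q$ and $X$ is geometrically connected; being a nontrivial separable (Galois) cover of $\PP^1$, it is ramified, since $\PP^1$ over an algebraically closed field admits no connected \'etale cover of degree~$>1$. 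Hence the branch locus is exactly $\{\mathfrak p\}$, as required. (For $\gamma=1$ take $X=\PP^1$.)

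The one place requiring care is the class field theory bookkeeping: the computation of the ray class group of $\PP^1$ with modulus $n\mathfrak p$, and the identification of the Frobenius at the unramified place $\infty$ with $[\infty]$. Everything else — the elementary divisibility facts that produce $d$ and $n$, the existence of a subgroup of prescribed index in a finite abelian group, and geometric connectedness (which is automatic once $\infty$ splits completely) — is routine. As a more concrete alternative to the abstract ray-class-group language, one can work inside Carlitz cyclotomic function fields: the subfield of $\FF_q(t)(\Lambda_{a^{n}})$, with $a$ the monic irreducible polynomial cutting out $\mathfrak p$, fixed by the inertia (= decomposition) group at $\infty$ is abelian over $\FF_q(t)$, ramified only at $\mathfrak p$, has $\infty$ split completely, and has Galois group $(\FF_q[t]/a^{n})^{\times}/\FF_q^{\times}$ of the same order computed above; a subextension of index $\gamma$ then does the job.
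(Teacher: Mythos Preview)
Your proof is correct and follows essentially the same route as the paper's: both choose a modulus $\mathfrak m=e[\mathfrak p]$ supported at a single closed point outside $S\cup\{\infty\}$, compute $\#\operatorname{Cl}_{\mathfrak m}^{0}(\PP^1)=q^{\deg(\mathfrak p)(e-1)}(q^{\deg(\mathfrak p)}-1)/(q-1)$, arrange for $\gamma$ to divide this order, and then realize the cover via an index-$\gamma$ subgroup (equivalently, an order-$\gamma$ quotient) of the ray class group in which $[\infty]$ becomes trivial so that $\infty$ splits completely. Your version is more explicit about the divisibility (splitting $\gamma$ into its $p$-part and prime-to-$p$ part), spells out geometric connectedness, and offers the Carlitz cyclotomic construction as a concrete alternative, but the class-field-theoretic core is identical to the paper's.
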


\begin{proof}
  Define a modulus $\fm = e[P]$, where $P$ is a yet-to-be-determined closed
  point of $\PP^1$ and $e \ge 1$ is an integer. By \cite[Thm.~V.1.7]{milneCFT},
  the order of the ray class group $\Cl_{\fm}^0 (\PP^1)$ is given by
  \[
    \#\Cl_{\fm}^0(\PP^1) = \frac{q^{\deg(P)} - 1}{q-1}
    q^{\deg(P)(e-1)}.
  \]
  This quantity is divisible by $\gamma$ for infinitely many closed points $P$
  and choices of $e \ge 1$. In particular, we may choose $P$ and $e$ so that
  $\gamma$ divides the order of $\#\Cl_{\fm}^0(\PP^1)$ and so that $P \not\in
  S \cup \{\infty\}$.

  Let $G$ be a quotient of $\Cl_{\fm}^0(\PP^1)$ of order $\gamma$.  Then the map
  $D \mapsto D - \deg(D) [\infty]$ followed by projection to $G$ gives a
  surjective homomorphism ${\alpha \colon \Cl_{\fm}(\PP^1) \to G}$. We now
  invoke the existence theorem of geometric class field theory
  \cite[p.124]{Serre_Rational_Points_Book} to obtain a curve $X_{/\FF_q}$ and a
  Galois covering  $X \to \PP^1$ with group $G$ whose branch locus is
  supported on $P$, and whose Frobenius element for $Q \ne P$ is given by
  $\alpha([Q])$. By construction, we have $\alpha([\infty]) = 1$, so that
  $\infty$ splits completely in~$X$.
\end{proof}

For a curve $C$, write $g(C)$ for its genus. 

\begin{lemma}
  \label{lem:branch}
  Let $f \colon Y \to X$ and $h \colon Z \to X$ be morphisms of curves over a
  field $k$ whose branch loci in $X$ are disjoint. Then the fiber product $Y
  \times_X Z$ is a smooth $k$-variety of dimension 1. If it is geometrically
  irreducible, then it has genus
   \[
     d_h g(Y) + d_f g(Z) - d_f d_h g(X) + (d_f - 1)(d_h - 1),
     \]
   where $d_f = \deg(f)$ and $d_h = \deg(h)$.   
\end{lemma}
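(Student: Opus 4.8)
The plan is to reduce the genus formula to two applications of the Riemann--Hurwitz formula, after first disposing of the smoothness claim. Write $W = Y \times_X Z$, with projections $p \colon W \to Y$ and $q \colon W \to Z$, set $\rho = f \circ p = h \circ q \colon W \to X$, and abbreviate $d_f = \deg f$, $d_h = \deg h$; note that $f$ and $h$, being nonconstant morphisms of smooth curves, are finite and flat. Since $q$ is the base change of $f$ along $h$, it is \'etale over $Z \smallsetminus h^{-1}(B_f)$, where $B_f$ denotes the branch locus of $f$; as $Z$ is smooth over $k$, this shows $W$ is smooth over $k$ along $\rho^{-1}(X \smallsetminus B_f)$. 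Symmetrically, $p$ is \'etale over $Y \smallsetminus f^{-1}(B_h)$, so $W$ is smooth over $k$ along $\rho^{-1}(X \smallsetminus B_h)$. The disjointness of $B_f$ and $B_h$ means these two open subsets cover $W$, so $W$ is smooth over $k$; it is equidimensional of dimension $1$ because $q$ is finite, flat, and surjective of degree $d_f \ge 1$, and it is proper over $k$ because $q$ is finite and $Z$ is proper. Hence, when $W$ is geometrically irreducible it is a curve in the sense of this paper.

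For the genus formula, I would first extend scalars to $\bar{k}$: the genus of a smooth proper geometrically irreducible curve is insensitive to base field extension, the formation of $W$ commutes with base change, degrees are preserved, and branch loci pull back to branch loci (so $B_f$ and $B_h$ remain disjoint). Thus we may assume $k = \bar{k}$ and $W$ irreducible, hence a smooth proper curve. The crucial point is then the identity $\deg \mathrm{Ram}_q = d_h \cdot \deg \mathrm{Ram}_f$ for ramification divisors. To prove it, fix a closed point $R \in Z$ and put $P = h(R)$. If $P \notin B_f$ then $q$ is \'etale over $R$ and contributes nothing. If $P \in B_f$ then $P \notin B_h$ by disjointness, so $h$ is unramified at $R$, and since $k$ is algebraically closed the induced map of complete local rings $\widehat{\mathcal{O}}_{X,P} \to \widehat{\mathcal{O}}_{Z,R}$ is an isomorphism; consequently $W \times_Z \Spec \widehat{\mathcal{O}}_{Z,R} \cong Y \times_X \Spec \widehat{\mathcal{O}}_{X,P}$, so the local contributions of $\mathrm{Ram}_q$ above $R$ agree with those of $\mathrm{Ram}_f$ above $P$ (this identification of differents holds with no tameness assumption). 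Since exactly $d_h$ points of $Z$ lie above each $P \in B_f$, summing first over those points and then over $P \in B_f$ gives $\deg \mathrm{Ram}_q = d_h \deg \mathrm{Ram}_f$.

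To finish, Riemann--Hurwitz applied to $f \colon Y \to X$ gives $\deg \mathrm{Ram}_f = (2 g(Y) - 2) - d_f(2 g(X) - 2)$, while applied to $q \colon W \to Z$ it gives $2 g(W) - 2 = d_f(2 g(Z) - 2) + \deg \mathrm{Ram}_q$. Combining these with the ramification identity yields
\[
  2 g(W) - 2 = d_f(2 g(Z) - 2) + d_h\bigl((2 g(Y) - 2) - d_f(2 g(X) - 2)\bigr),
\]
and expanding and dividing by $2$ produces $g(W) = d_h g(Y) + d_f g(Z) - d_f d_h g(X) + (d_f - 1)(d_h - 1)$, using $(d_f - 1)(d_h - 1) = d_f d_h - d_f - d_h + 1$; running the same argument with $p \colon W \to Y$ in place of $q$ reproduces the same expression, a reassuring check on the $(f, Y) \leftrightarrow (h, Z)$ symmetry. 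The main obstacle is the identity $\deg \mathrm{Ram}_q = d_h \deg \mathrm{Ram}_f$: this is exactly where disjointness of the branch loci is indispensable, since overlapping branch points would create interference at the fiber product not captured by a single multiplicative factor, and it is also the step where one must be slightly careful in positive characteristic by arguing with complete local rings rather than with ramification indices. The smoothness verification and the Riemann--Hurwitz bookkeeping are routine.
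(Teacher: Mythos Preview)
Your proof is correct and follows essentially the same strategy as the paper: both establish smoothness by observing that each projection from $W$ is \'etale away from the pullback of the relevant branch locus, and both deduce the genus by showing that disjoint branch loci force the ramification of the fiber product to split additively. The only cosmetic difference is that the paper applies Riemann--Hurwitz to the composite $W \to X$ via the divisor identity $R_W = \hat f^{*} R_Z + \hat h^{*} R_Y$ and three Hurwitz formulas, whereas you apply it to the projection $q \colon W \to Z$ via $\deg \mathrm{Ram}_q = d_h \deg \mathrm{Ram}_f$ and two Hurwitz formulas; these are equivalent through multiplicativity of the different.
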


\begin{proof}
  Write $W = Y \times_X Z$ for ease of notation. The following diagram describes
  our situation:
  \[
  \begin{tikzcd}
    W \ar[r,"\hat f"] \ar[d,"\hat h"] & Z \ar[d,"h"] \\
    Y \ar[r,"f"] & X
  \end{tikzcd}
  \]
  Let $B_f$ and $B_h$ be the branch loci of $f$ and $h$, respectively.  The
  base extension of a smooth morphism is smooth, so $\hat f$ is smooth above $Z
  \smallsetminus h^{-1}(B_f)$. Similarly, $\hat h$ is smooth above $Y
  \smallsetminus f^{-1}(B_h)$. Since the branch loci of $f$ and $h$ are
  disjoint, we conclude that $W$ is regular.

  For the genus formula, write $R_Y$, $R_Z$, and $R_W$ for the ramification
  divisors of the morphisms $f$, $h$, and $f \hat h = h \hat f$,
  respectively. Since $f$ and $h$ have  disjoint branch loci, we learn that
  \[
     R_W = \hat f^* R_Z + \hat h^* R_Y.
  \]
  Taking degrees gives
  \[
     \deg(R_W) = \deg(f) \deg(R_Z) + \deg(h) \deg(R_Y).
     \]
  The Hurwitz formulae for the morphisms $f \colon Y \to X$, $h \colon Z
  \to X$, and $f \hat h \colon W \to X$ show that
  \begin{align*}
    \deg(R_Y) &= 2g(Y) - 2 - \deg(f) \left(2g(X) - 2\right) \\
    \deg(R_Z) &= 2g(Z) - 2 - \deg(h) \left(2g(X) - 2\right) \\
    \deg(R_W) &= 2g(W) - 2 - \deg(f) \deg(h) \left(2g(X) - 2\right).    
  \end{align*}
  Solving for $g(W)$ in this system of four equations gives the desired result.
\end{proof}

Now we complete the proof of the Theorem~\ref{thm:main}. Let $B_0$ be the branch
locus of $h \colon \PP^1 \to \PP^1$. Apply Lemma~\ref{lem:abelian} to construct
an abelian cover $f_0 \colon X_0 \to \PP^1$ of degree $\gamma$ such that
$\infty$ splits completely in $X_0$, and such that the branch locus of $f_0$ is
disjoint from $B_0$. Next we iteratively apply Lemma~\ref{lem:abelian} to
construct a sequence of abelian covers $f_i \colon X_i \to \PP^1$ of degree
$\gamma$ and an increasing sequence of closed subsets $B_0 \subsetneq B_1
\subsetneq B_2 \subsetneq \cdots \subsetneq \PP^1$ such that
\begin{itemize}
  \item $\infty$ splits completely in $X_i$;
  \item the branch locus of $f_i$ is a closed point disjoint from $B_i$; and
  \item the set $B_{i+1}$ is the union of $B_i$ and the branch locus of $f_i$.
\end{itemize}
Since the branch locus of $f_i$ is disjoint from the branch locus of $h$, we may
apply Lemma~\ref{lem:branch} to see that $C_i = X_i \times_{\PP^1} \PP^1$ is a
smooth 1-dimensional variety over $\FF_q$.

We argue next that $C_i$ is geometrically connected. Note that $C_i$ is a closed
subscheme of $X_i \times \PP^1$. The intersection theory of ruled surfaces
\cite[V.2.3]{Hartshorne_Bible} shows that the group of numerical equivalence
classes of $X_i \times \PP^1$ is generated by $e_1 = X_i \times \{\text{pt}\}$
and $e_2 = \{pt\} \times \PP^1$, and these two curves satisfy $e_1^2 = e_2^2 =
0$ and ${e_1.e_2 = 1}$. Since $C_i$ is a fiber product, each of its geometric
components dominates $X_i$ and $\PP^1$ for the respective projection
maps. Consequently, any geometric component is numerically equivalent to $ae_1 +
be_2$ for some $a,b > 0$. If $C_i$ has two distinct geometric components, then
those components have nonzero intersection. As any intersection point between
components is non-smooth, we have a contradiction. It follows that $C_i$ is
geometrically connected.

Now we argue that the genus of $C_i$ tends to infinity with $i$.
Lemma~\ref{lem:branch} shows that $C_i$ has genus
\[
    g(C_i) = (q+1) g(X_i) + q(\gamma-1).
\]
The branch locus of $f_i \colon X_i \to \PP^1$ is supported on a closed point
that is disjoint from the branch loci of all $f_j$ with $j < i$. As there are
only finitely many closed points of a given degree, the degree of this branch
locus must tend to infinity with $i$. By the Hurwitz formula, $g(X_i) \to
\infty$ with $i$, and hence the genus of $C_i$ must also tend to infinity with
$i$.

Finally, we show that $C_i$ has $\gamma(q+1)$ rational points and gonality
$\gamma$. Consider the following fiber product
diagram:
  \[
  \begin{tikzcd}
    C_i \ar[r,"\hat f_i"] \ar[d,"\hat h"] & \PP^1 \ar[d,"h"] \\
    X_i \ar[r,"f_i"] & \PP^1
  \end{tikzcd}
  \]
Note first that $\hat f_i$ has the same degree as $f_i$, namely $\gamma$; thus
$C_i$ has gonality at most $\gamma$. Since $\infty$ splits completely for the
map $f_i \colon X_i \to \PP^1$, every pre-image of $\infty$ under $h$ splits
completely in $C_i$ under the map $\hat f_i$
\cite[Prop.~3.9.6b]{Stichtenoth_book}. In particular, since $h$ maps all
rational points of $\PP^1$ to $\infty$, we see that $\hat f_i^{-1}(x)$ consists
of $\gamma$ rational points for each $x \in \PP^1(\FF_q)$. Thus $\#C_i(\FF_q)
\ge \gamma(q+1)$. By \eqref{eq:morphism_bound}, $C_i$ must have gonality at
least $\gamma$. We conclude that $C_i$ has gonality $\gamma$ and exactly $\gamma
(q+1)$ rational points, and the proof of Theorem~\ref{thm:main} is complete.

\bibliographystyle{plain}
\bibliography{class_fields}

\end{document}